\newtheorem{definition}{Definition}[section]
\newtheorem{lemma}[definition]{Lemma}
\newtheorem{Auxlemma}[definition]{Auxiliary Lemma}
\newtheorem{theorem}[definition]{Theorem}
\newtheorem{Coro}[definition]{Corollary}
\newtheorem{remark}{Remark}
\newtheorem{condition}{Condition}[section]
\newcommand{\be}{\begin{eqnarray}}
\newcommand{\ee}{\end{eqnarray}}
\newcommand{\bes}{\begin{eqnarray*}}
\newcommand{\ees}{\end{eqnarray*}}
\newcommand*{\argmin}{\mathop{\arg \min}\limits}
\newcommand{\KX}{K_{X}}
\newcommand{\CC}{C}
\begin{document}

\begin{center}
\LARGE{Censored linear model in high dimensions}
\\\vspace{1.5cm}
 \normalsize{\textit{Penalised linear regression on high-dimensional data with left censored response variable}}
\vspace{1cm}

\Large{Patric M\"uller and Sara van de Geer \\Seminar f\"ur Statistik, ETH Zurich}
\end{center}
\vspace{1.5cm}

\begin{abstract}
Censored data are quite common in statistics and have been studied in depth in the last years (for some early references, see \cite{Powell}, \cite{CSR}, \cite{Chay_Cens}). In this paper we consider censored high-dimensional data.\\
High-dimensional models are in some way more complex than their low-dimensional versions, therefore some different techniques are required. For the linear case appropriate estimators based on penalised regression, have been developed in the last years (see for example \cite{bickel2009sal}, \cite{koltch09b}). In particular in sparse contexts the $l_1$-penalised regression (also known as LASSO) (see \cite{Tibshirani96}, \cite{BvdG2011} and reference therein) performs very well.
Only few theoretical work was done in order to analyse censored linear models in a high-dimensional context.\\
We therefore consider a high-dimensional censored linear model, where the response variable is left-censored. We propose a new estimator, which aims to work with high-dimensional linear censored data. Theoretical non-asymptotic oracle inequalities are derived.
\end{abstract}
\newpage
\section{Introduction}
Censored data are quite common in statistics and have been studied in depth in the last years (for some early references, see \cite{Powell}, \cite{CSR}). We consider the censored linear model, where the response variable is left censored and its non-censored version linearly depends on the predictors. Observed are the predictors, the censored response variable and the censoring level. Our main goal is to recover the linear dependency between the covariates and the uncensored response variable. An example fitting this model is the Social Security Administration earnings records in the years '60s are censored at the 'taxable maximum', that is anyone earning more than the maximum is recorded as having earned at the maximum (see \cite{Chay_Cens}). In particular, \cite{Powell} proposed an estimator and proved its strong consistency.\\
In a high-dimensional context, where the dimension of the parameter set $p$ is bigger than the number of observations $n\ll p$ the method of Powell is not directly applicable because it would be underdefined. In order to overcome the difficulties given by the high-dimensional case a new estimator is required.\\
There is a large body of work on linear high-dimensional models. A common approach is to construct penalised estimators like the LASSO (least absolute shrinkage and selection operator proposed in \cite{Tibshirani96}) or the ridge regression and elastic net (see \cite{zou05}). The LASSO is widely studied (see e.g.\ \cite{koltchinskii2011oracle} and  \cite{BvdG2011} and references therein) and gives remarkable results in sparse contexts.\\
In this paper LASSO techniques are combined with the ideas of \cite{Powell} in order to obtain a pertinent estimator for the high-dimensional censored linear model. We prove theoretical results and give oracle bounds for both the prediction and the estimation error of our estimator.
Simulation supporting the theoretical results are also presented.\\
\medskip \\
The paper is organised as follows. We begin in Section \ref{Sec_Model} with the description of the model and the required notation and assumptions for the main theorem, which is given in Section \ref{Sec_Main_Res}. The proof of the theorem and the required technical tools can be found in Section \ref{Sec_Proof}. Finally in Section \ref{Sec_Simul} we present simulations.

\section{Model description and notation}\label{Sec_Model}
Let  $(x,c)\in\R^p\times \R$ be a regression random vector with distribution $Q_{x,c}$ and $\varepsilon \in \R$ be an error term with cumulative distribution function $\nu_0$. Moreover, $(x_i,c_i)$ and $\varepsilon_i,~ i=1,\ldots,n$ are $\iid$ copies of $(x,c)$ and $\varepsilon$ respectively.
\medskip \\
Consider the following left-censored linear model:
\be
y_i=\max\left\lbrace c_i,x_i\beta^0+\varepsilon_i\right\rbrace  ~ \qquad i=1,\ldots,n . \label{modelGen}
\ee
The dependent variable $y_i$, the regression vector $x_i$ and the censoring level $c_i$ are observed for each $i$, while the (conformable) parameter vector $\beta^0\in\R^p$ and the error term $\varepsilon_i$ are unobserved.
\begin{remark}
Real datasets often have a fixed, known censoring level (e.g. the tax example of \cite{Chay_Cens}).
\end{remark}
The following special cases of our model are of special interest:
\medskip \\
The \textbf{constant-censored model} corresponds to the special case of Model \eqref{modelGen}, where $c_i\equiv c_0$ constant and known.
\be
y_i=\max\left\lbrace c_0,x_i\beta^0+\varepsilon_i\right\rbrace  ~, \qquad i=1,\ldots,n . \label{modelResConst}
\ee
Furthermore we define the \textbf{zero-censored model} as
\be
y_i=\max\left\lbrace 0,x_i\beta^0+\varepsilon_i\right\rbrace  ~, \qquad i=1,\ldots,n . \label{modelRes}
\ee
I.e. $c_i$ in model \eqref{modelGen} is fixed and equal 0.

In the low-dimensional case ($p \ll n$) \cite{Powell} showed that in Model \eqref{modelRes}, under some standard assumptions, the estimator 
\be
\hat\beta^{Powell}=\argmin_{\beta}\frac{1}{n}\sum^n_{i=1} \left|y_i-\max\left\lbrace  0,x_i\beta
\right\rbrace\right| \label{Eq_Powell_Est}
\ee
is strongly consistent.
\medskip \\
Consider now the high-dimensional left-censored linear model, i.e. Model \eqref{modelGen} in the high-dimensional case where $p\gg n$. Estimator \eqref{Eq_Powell_Est} leads now to an underspecified system and can not be used in high-dimensional contexts. A new estimator is therefore required. A common approach to high-dimensional linear data is the so-called $l_1$-penalised regression (LASSO).\\
Combining the $l_1$-penalty with the idea of \cite{Powell} we define a new estimator for $\beta^0$ in the high-dimensional context.
\be \label{def_estimator_Gen}
\hat\beta:=\argmin_{\beta \in \mathcal{B}} \left\{ \frac{1}{n}\sum^n_{i=1} \left|y_i-\max\left\lbrace  c_i,x_i\beta \right\rbrace\right| +
\lambda\left\|\beta\right\|_1 \right\} ~,
\ee
where $\mathcal{B}$ is a bounded set.\\
The idea behind this estimator is that the first term controls the prediction error, whether the second term keeps the sparsity under control. The parameter $\lambda$ is a trade-off parameter. An appropriate choice of $\lambda$ results from Theorem \ref{Coro_0_Cens}.

\subsection*{Notation}
Hereafter we list some important notation we use in this paper.
\medskip \\
We denote by $x_{ij}$ the $j$-th component of the predictor vector $x_i$. 
\smallskip \\
Define now for some real function $f$ on $\R^p\times\R$ the loss function $\rho_f$ and the theoretical risk $P$ as:
$$
\rho_f(x,y,c):=|y-f(x,c)|,
$$
$$
P_{\rho_{f}}:=\ERW{\rho_f(x,y,c)}.
$$
The empirical risk is then 
$$
P_{n,\rho_f}:=\frac{1}{n}\sum_{i=1}^n{\rho_f(x_i,y_i,c_i)}.
$$
\\
Furthermore define
\be
f_0(x,c)&:=&\argmin_a \ERW{|y-a|\Big|x,c}  \label{def_f0}
\ee
and for some $\beta\in \R^p$
\bes
f_\beta(x,c)&:=&x^T\beta\vee c~.
\ees
The excess risk for $f_\beta$ is then
\bes
\mathcal{E}(f):=P_{\rho_{f}}-P_{\rho_{f_0}}.
\ees
Denote with $\|\beta\|_1:=\sum_{j=1}^p |\beta_j|$ the $l_1$-norm of the vector $\beta$ and with $S_\beta:=\{j:\beta_j\neq0\}$ its active set, which has cardinality $s_\beta$. 
Then $S_\beta^c=\{ j:\beta_j=0\}$. Let then $\beta_{j,S}:=\beta_j\cdot \mathbbm{1}_{j\in S}$, $j=1,\ldots,p$. Finally define for some function $f$, $\|f\|^2:=\ERW{f_\beta^2(x,c)}$.

\subsection*{Model Assumptions}
We now list some conditions we require in order to prove our results.
\begin{condition}[Design condition]\label{Cond_design1} ~\\
For some constant $\KX$ it holds that
$$ \max_{i,j} |x_{ij} | \le \KX . $$
\end{condition}
A bound on the $X$-values is a somewhat restrictive assumption. 
However we can often approximate an unbounded distribution with its truncated version. Furthermore this condition is quite standard in high-dimensional contexts (see \cite{BvdG2011}).

\begin{condition}[Design condition II]\label{Cond_design2} ~\\
For some constant $K_0$ it holds that
$ x_i(\beta-\beta^0)$ and $f_0(x_i)-f_\beta (x_i)$ takes values in interval $[-K_0,K_0]$ for all $i=i,\ldots,n$, $\forall \beta \in\mathcal{B}$.
\end{condition}
\begin{condition}[Scaling condition]\label{Cond_scaling}
$$
\ERW{x_{ij}^2}=1   \qquad\qquad \text{for all } j=1\ldots p,~i=1,\ldots p.
$$
\end{condition}
This condition can be obtained by rescaling the variables. (Note that we assumed that $x_i$ are i.i.d. for $i=1,\ldots, p$).
\begin{condition}[Solution uniqueness]\label{Cond_sol_uniqueness}~\\
The function $f_0(x,c)$, defined in \eqref{def_f0}, is uniquely defined.
\end{condition}
\begin{condition}[Error assumptions]\label{Cond_Error_assumption}~\\
The distribution function $\nu_0$ of the error term $\varepsilon$, has median $0$ and is everywhere continuously differentiable, with Lipshitz derivative $\dot{\nu_0}$. Furtheremore assume that $\dot{\nu_0}(0)>0$
\end{condition}
\begin{condition}[Censoring condition]\label{Cond_Censoring_condition}~\\
There exists some constant $C_2>0$ such that for all $\beta$ satisfying $||(\beta-\beta^0)_{S_{\beta^0}^c}||_1\leq 3 ||(\beta-\beta^0)_{S_{\beta^0}}||_1$ it holds that:
\bes
\|f_\beta -f_0\|_2^2\geq C_2 \|x^T(\beta-\beta^0)\|_2^2 .
\ees
\end{condition}
This condition only guaranties that there are not, too many censored data. If $(x_i,c_i)$ are $\iid$ and have a symmetric joint distribution then the censoring condition holds for any $\beta,\beta^0$ with constant $C_2=1/4$.
\begin{condition}[Compatibility condition]\label{Cond_Compatibility_condition}~\\
The \textbf{Compatibility condition} is satisfied for the set $S_{\beta^0}$ if for some
$\phi_0>0$ and all $\beta$ satisfying $||(\beta-\beta^0)_{S_{\beta^0}^c}||_1\leq 3 ||(\beta-\beta^0)_{S_{\beta^0}}||_1$ it holds that:
\bes
||(\beta-\beta^0)_{S_{\beta^0}}||_1^2\leq (\beta-\beta^0)^T \ERW{x^T  x}(\beta-\beta^0) \frac{s_{\beta^0}}{\phi_0^2}.
\ees
Here $\phi_0^2$ is the so called compatibility constant introduced by \cite{vandeG07}.
\end{condition}

\section{Results}\label{Sec_Main_Res}
Consider the high-dimensional version of model \eqref{modelGen} ($p\gg n$). Recall the definition of the $l_1$-penalised estimator \eqref{def_estimator_Gen}.
\bes
\hat\beta=\argmin_{\beta \in \mathcal{B}} \left[ \frac{1}{n}\sum^n_{i=1} \left|y_i-\max\left\lbrace  c_i,x_i\beta \right\rbrace\right| +
\lambda\left\|\beta\right\|_1 \right] ~,
\ees

\begin{theorem}\label{Teo_principale_Gen}
Assume Conditions \ref{Cond_design1}-\ref{Cond_Compatibility_condition} and define 
\bes
\lambda(t):=4\KX \sqrt{\frac{2\log(2p)}{n}}+\KX \sqrt{\frac{8t}{n}}.
\ees
Then for $\lambda\geq4\lambda(t)$ and some constant $\CC$ depending only on $K_0$ and $C_2$ (see Conditions \ref{Cond_design2} and \ref{Cond_Compatibility_condition} respectively), with at least probability $1-1/p$, it holds
\be \label{Eq_Teo1_Gen}
\mathcal{E}(f_{\hat\beta}) \leq \lambda^2 \frac{9s_{\beta^0} \CC}{ \phi_0^2}
\ee
and
\be \label{Eq_Teo2_Gen}
||\hat{\beta}-\beta^0||_1\leq\lambda\frac{6\CC s_{\beta^0}}{\phi_0^2}.
\ee
\end{theorem}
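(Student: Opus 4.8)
The plan is to run the standard $\ell_1$-penalised M-estimation argument---basic inequality, concentration of an empirical process, and a quadratic margin bound---the only genuinely new ingredient being the curvature (margin) analysis of the censored absolute-deviation loss. A preliminary observation organises everything: since $\varepsilon$ has median $0$ and $y=\max\{c,x\beta^0+\varepsilon\}$, the conditional median of $y$ given $(x,c)$ equals $\max\{c,x\beta^0\}$, so the minimiser in \eqref{def_f0} satisfies $f_0=f_{\beta^0}$ and hence $\mathcal{E}(f_{\beta^0})=0$. By optimality of $\hat\beta$ in \eqref{def_estimator_Gen},
$$P_{n,\rho_{f_{\hat\beta}}}+\lambda\|\hat\beta\|_1\le P_{n,\rho_{f_{\beta^0}}}+\lambda\|\beta^0\|_1 .$$
Adding and subtracting the theoretical risks and using $\mathcal{E}(f_{\beta^0})=0$ rearranges this to
$$\mathcal{E}(f_{\hat\beta})+\lambda\|\hat\beta\|_1\le (P_n-P)\big(\rho_{f_{\beta^0}}-\rho_{f_{\hat\beta}}\big)+\lambda\|\beta^0\|_1 .$$

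Next I would control the empirical process on the right. Both $u\mapsto|y-u|$ and $u\mapsto\max\{c,u\}$ are $1$-Lipschitz, so $|\rho_{f_\beta}-\rho_{f_{\beta^0}}|\le|x(\beta-\beta^0)|\le\sum_j|x_j|\,|\beta_j-\beta^0_j|$ and $\beta\mapsto\rho_{f_\beta}$ is a $1$-Lipschitz function of the linear index $x\beta$. Symmetrisation followed by the contraction inequality (contributing a factor $4$) therefore bounds $(P_n-P)(\rho_{f_\beta}-\rho_{f_{\beta^0}})$, uniformly in $\beta$, by $\|\beta-\beta^0\|_1$ times $\max_j\big|\frac1n\sum_i\sigma_i x_{ij}\big|$ with $\sigma_i$ Rademacher. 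Condition \ref{Cond_design1} gives $|x_{ij}|\le\KX$; a sub-Gaussian maximal inequality over the $2p$ signed coordinates delivers the $4\KX\sqrt{2\log(2p)/n}$ term and a bounded-differences concentration step the $\KX\sqrt{8t/n}$ term, i.e.\ exactly $\lambda(t)$. Hence on an event $\mathcal{T}$ of probability at least $1-e^{-t}$ (take $t=\log p$ for the stated $1-1/p$),
$$\big|(P_n-P)(\rho_{f_{\hat\beta}}-\rho_{f_{\beta^0}})\big|\le\lambda(t)\,\|\hat\beta-\beta^0\|_1 .$$

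On $\mathcal{T}$, inserting this and using $\lambda(t)\le\lambda/4$ gives $\mathcal{E}(f_{\hat\beta})+\lambda\|\hat\beta\|_1\le\frac{\lambda}{4}\|\hat\beta-\beta^0\|_1+\lambda\|\beta^0\|_1$. Splitting the $\ell_1$-norms over $S_{\beta^0}$ and $S_{\beta^0}^c$ in the usual way yields both the cone condition $\|(\hat\beta-\beta^0)_{S_{\beta^0}^c}\|_1\le3\|(\hat\beta-\beta^0)_{S_{\beta^0}}\|_1$ (so that Conditions \ref{Cond_Censoring_condition} and \ref{Cond_Compatibility_condition} become available) and the inequality $\mathcal{E}(f_{\hat\beta})+\frac{\lambda}{4}\|\hat\beta-\beta^0\|_1\le\frac{3\lambda}{2}\|(\hat\beta-\beta^0)_{S_{\beta^0}}\|_1$. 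The last ingredient is the margin bound: a second-order expansion of $a\mapsto\ERW{|y-a|\,\big|\,x,c}$ about its minimiser $f_0$, using median $0$, $\dot{\nu_0}(0)>0$ and $\dot{\nu_0}$ Lipschitz (Condition \ref{Cond_Error_assumption}) together with the range bound $K_0$ of Condition \ref{Cond_design2}, produces a constant $\kappa=\kappa(K_0,\dot{\nu_0})$ with $\mathcal{E}(f_\beta)\ge\kappa\|f_\beta-f_0\|_2^2$. Chaining this with the Censoring condition and then the Compatibility condition gives $\mathcal{E}(f_{\hat\beta})\ge\kappa C_2\phi_0^2\,\|(\hat\beta-\beta^0)_{S_{\beta^0}}\|_1^2/s_{\beta^0}$. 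Substituting into the displayed inequality leaves a quadratic inequality in $\|(\hat\beta-\beta^0)_{S_{\beta^0}}\|_1$; solving it, and using $2ab\le a^2+b^2$ to separate the excess-risk and estimation-error contributions, yields both \eqref{Eq_Teo1_Gen} and \eqref{Eq_Teo2_Gen} with $\CC$ absorbing $\kappa$ and $C_2$.

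I expect the margin bound to be the main obstacle, because censoring destroys the smoothness that usually makes it routine. The minimiser $f_0=\max\{c,x\beta^0\}$ is nonsmooth across the boundary $x\beta^0=c$, and the conditional law of $y$ carries an atom at $c$, so the quadratic lower bound must be verified uniformly over both regimes: the uncensored regime ($x\beta^0>c$), where $f_0$ is an interior median and the expansion is the classical one driven by $\dot{\nu_0}(0)$, and the censored regime ($x\beta^0\le c$), where $f_0=c$ sits at the atom and the one-sided derivative $2\nu_0(c-x\beta^0)-1\ge0$ enters. One must check that this boundary term can only help and that the Lipschitz control of $\dot{\nu_0}$ keeps $\kappa$ positive over the whole range $[-K_0,K_0]$ permitted by Condition \ref{Cond_design2}.
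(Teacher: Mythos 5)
Your overall architecture coincides with the paper's own proof (basic inequality from the definition of $\hat\beta$; symmetrisation, contraction and concentration for the empirical process; a quadratic margin bound chained with the censoring and compatibility conditions into a quadratic inequality in $\|(\hat\beta-\beta^0)_{S_{\beta^0}}\|_1$), and your cone-condition bookkeeping is correct. However, two of your steps have genuine gaps, and they are precisely the two places where the paper has to do real work. The first is the empirical-process bound defining your event $\mathcal{T}$. Symmetrisation, the contraction inequality and Massart's inequality control $Z_M:=\sup_{\|\beta-\beta^0\|_1\le M}\left|(P_n-P)\left(\rho_{f_\beta}-\rho_{f_{\beta^0}}\right)\right|$ for each \emph{fixed} radius $M$, giving $P\left[Z_M\ge M\lambda(t)\right]\le e^{-t}$; moreover, contraction is an inequality between expectations, so your claim that the process is bounded ``uniformly in $\beta$'' by $\|\beta-\beta^0\|_1\max_j\left|\frac1n\sum_i\sigma_i x_{ij}\right|$ is not a legitimate pathwise statement. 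The radius you actually need, $\|\hat\beta-\beta^0\|_1$, is random, so an extra uniformisation-over-radii argument is indispensable. The paper supplies it with the peeling device (Lemma \ref{Lemma_Peeling}): dyadic shells in $\|\beta-\beta^0\|_1$, a floor $\delta=p^{-2}$ in the denominator, the threshold $2\lambda(t)$ in place of $\lambda(t)$, a union bound costing a logarithmic factor (which is why the paper takes $t=2\log p$ rather than your $t=\log p$ to reach probability $1-1/p$), and a separate trivial treatment of the case $\|\hat\beta-\beta^0\|_1\le p^{-2}$. Without peeling or an equivalent device, the display defining $\mathcal{T}$ is unproved.

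The second gap is the margin bound, which fails as you sketch it. The second-order expansion gives, in the paper's notation, $h_{x,c}(z)\ge \dot{\nu_0}(0)z^2-\frac{L}{3}|z|^3$, which is a quadratic lower bound only for $|z|$ of order $\dot{\nu_0}(0)/L$; for $|z|$ up to $K_0$ the cubic term can make the right-hand side negative, so ``Lipschitz control of $\dot{\nu_0}$'' cannot by itself keep a quadratic constant positive on all of $[-K_0,K_0]$. Worse, Condition \ref{Cond_Error_assumption} constrains $\nu_0$ only near the origin: in the censored regime $x\beta^0<c$ the function $u\mapsto\nu_0(u-x\beta^0)$ may sit exactly at the level $\tfrac12$ on an interval just above the atom at $c$, in which case the conditional median is not unique, $h_{x,c}$ vanishes at some $z\ne 0$, and no quadratic lower bound can hold. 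This is exactly why the paper assumes Condition \ref{Cond_sol_uniqueness} (which you never invoke) and globalises the local expansion via Auxiliary Lemma \ref{Lemma_Stadler}: uniqueness of the minimiser gives $\inf_{\epsilon\le|z|\le K_0}h_{x,c}(z)\ge\alpha_\epsilon>0$, hence a bound of the form $h_{x,c}(z)\ge\left(\alpha_{\epsilon_0}/K_0^2\right)z^2$ away from the origin, which combines with the local expansion to produce the constant $C_1^2$ of Lemma \ref{Lemma_E>|f-f_0|_Gen}. Your analysis of the two regimes at the censoring boundary (the one-sided derivative $2\nu_0(c-x\beta^0)-1\ge 0$ ``can only help'') matches the paper's computation, but without the uniqueness assumption and this compactness step the constant $\kappa$ you need does not exist in general.
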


\begin{remark}[Asymptotics]~\\
Asymptotically we have
$$
\mathcal{E}(f_{\hat\beta})=O\left( \frac{s_{\beta^0}\log p}{n}\right) 
$$
and
$$
\|\hat{\beta}-\beta^0\|_1=O \left( s_{\beta^0} \sqrt{\frac{\log p}{n}}\right) 
$$
In a sparse context, where 
$$
\frac{s_{\beta^0}\log p}{n} \rightarrow 0
$$
as $n$, $p$, and possibly also $s_{\beta^0}$ tend to infinity, the excess risk converges to $0$. If furthermore $s_{\beta^0}\sqrt{\frac{\log p}{n}} \rightarrow 0$, then also the estimation error converges to $0$.
\end{remark}
\begin{remark}
The bounds for prediction and estimation error given in Theorem \ref{Teo_principale_Gen} do not depend on the distribution of the censoring factor. In fact the censoring level $c$ has a direct influence on the constant $C_2$ in Assumption \ref{Cond_Censoring_condition}. In general higher values for $c_i$ increase the number of censored data. This leads to smaller $C_2$. 
The fact that the censoring level does not directly appear in the theorem should be understood in the sense that the percentage of censored data is important, not the censoring level.
\end{remark}

We now shortly focus on Model \eqref{modelRes}, i.e. the special case where $c_i$ are all fixed and equal 0. In this case the function $f$, as well as the loss function do not any more depend on $c$. Therefore we just write $f(x)$ and $\rho_f(x,y)$ in spite of $f(x,0)$ and $\rho_f(x,y,0)$ respectively.\\
The estimator \eqref{def_estimator_Gen} can be rewritten as
\bes
\hat\beta^{res}:=\argmin_{\beta \in\mathcal{B}} \left\{ \frac{1}{n}\sum^n_{i=1} \left|y_i-\max\left\lbrace  0,x_i\beta \right\rbrace\right| +
\lambda\left\|\beta\right\|_1 \right\} ~,
\ees
This corresponds to the high-dimensional penalised version of the estimator proposed by Powell.

\begin{Coro}\label{Coro_0_Cens}
Assume the same conditions and use the same definitions as in Theorem \ref{Teo_principale_Gen}, then
for $\lambda\geq4\lambda(t)$ and some constant $\CC$, with at least probability $1-1/p$, it holds
\be \label{Eq_Teo1}
\mathcal{E}(f_{\hat\beta}) \leq \lambda^2 \frac{9s_{\beta^0} \CC}{ \phi_0^2}
\ee
and
\be \label{Eq_Teo2}
||\hat{\beta}^{res}-\beta^0||_1\leq\lambda\frac{6\CC s_{\beta^0}}{\phi_0^2}.
\ee
\end{Coro}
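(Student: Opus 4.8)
The plan is to obtain Corollary \ref{Coro_0_Cens} as an immediate specialisation of Theorem \ref{Teo_principale_Gen}. The key observation is that Model \eqref{modelRes} is exactly Model \eqref{modelGen} with the degenerate choice $c_i \equiv 0$: a deterministic, known censoring level is simply the special case of a known random $c$ whose distribution $Q_{x,c}$ is concentrated on $\{c = 0\}$. Under this identification the estimator $\hat\beta^{res}$ coincides with the general estimator $\hat\beta$, since $\max\{c_i, x_i\beta\} = \max\{0, x_i\beta\}$ when $c_i = 0$, and the $l_1$-penalty term is unchanged. Consequently the objective functions minimised over the same bounded set $\mathcal{B}$ agree identically.

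First I would check that Conditions \ref{Cond_design1}--\ref{Cond_Compatibility_condition}, which are assumed in the statement, specialise correctly to this setting. Conditions \ref{Cond_design1}, \ref{Cond_design2}, \ref{Cond_scaling} and \ref{Cond_Compatibility_condition} constrain only the design $x$ and therefore transfer verbatim. Condition \ref{Cond_Error_assumption} concerns only the error term $\varepsilon$ and is likewise unaffected. The two conditions that involve the censoring level, namely Condition \ref{Cond_sol_uniqueness} (uniqueness of $f_0$) and Condition \ref{Cond_Censoring_condition} (the censoring condition), reduce to statements about $f_\beta(x) = x^T\beta \vee 0$ and $f_0(x)$, which remain well-defined because the conditional minimiser in \eqref{def_f0} no longer depends on $c$; in particular $f_0$ and $f_\beta$ are now genuine functions of $x$ alone, exactly as noted in the remark preceding the corollary.

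With these verifications in place, I would simply invoke Theorem \ref{Teo_principale_Gen} with $c_i \equiv 0$. Because the hypotheses and the definition of $\lambda(t)$ are inherited unchanged, the conclusions \eqref{Eq_Teo1_Gen} and \eqref{Eq_Teo2_Gen} translate directly into \eqref{Eq_Teo1} and \eqref{Eq_Teo2}, with the same constant $\CC$ depending only on $K_0$ and $C_2$. Thus the excess risk $\mathcal{E}(f_{\hat\beta^{res}})$ and the estimation error $\|\hat\beta^{res} - \beta^0\|_1$ are bounded precisely as claimed, on the same event of probability at least $1 - 1/p$.

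Since the argument is a direct specialisation of a theorem already proved, there is no genuine obstacle. The only point that deserves a line of care is confirming that fixing $c \equiv 0$ preserves the well-definedness and measurability underlying Conditions \ref{Cond_sol_uniqueness} and \ref{Cond_Censoring_condition}, which it does trivially; everything else is inherited from the general result.
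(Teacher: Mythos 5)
Your proof is correct and follows essentially the same route as the paper, which simply invokes Theorem \ref{Teo_principale_Gen} with $c_i \equiv 0$. Your additional verification that each of Conditions \ref{Cond_design1}--\ref{Cond_Compatibility_condition} specialises correctly to the zero-censored case is a reasonable elaboration of the same one-line argument.
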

The proof directly follows from Theorem \ref{Teo_principale_Gen} taking $c_i\equiv 0$.

\section{Proofs}\label{Sec_Proof}

\subsubsection*{Preliminary remarks and lemmas}

Denote by $\nu(y|x,c)$ the distribution function of the censored random variable $y$ given $x$ and $c$. Then
\bes
\nu(y|x,c)=\left\lbrace 
 \begin{array}{rl}
   0&\textrm{if } y <c\\
   \nu_0(y-x\beta^0)&\textrm{if } y\geq c
 \end{array}
 \right. ~.
\ees
Consequently $\nu(y|x,c)$ is everywhere differentiable, up to $y=c$.
\bigskip \\
Furthermore we show that $f_0(x,c)=f_{\beta^0}(x,c)$.
\begin{proof}
$\ERW{|y-a|\Big|x,c}$ is minimized by $a=\nu^{-1}\left(\frac{1}{2} |x,c\right)$ (the median of $y$ given
$x$ and $c$).
We have:
\bes
f_0(x,c)&=&\\
\median(y|x,c)&=&\median\big(\max\{x\beta^0+\varepsilon,c\}|x,c\big)\\
&=&\max\{x\beta^0+\median(\varepsilon),c\}\\
&=&\max\{x\beta^0,c\} \quad(=x\beta^0\vee c)\\
&=& f_{\beta^0}(x,c) .
\ees
\end{proof}
\begin{remark}
Because $f_0$ is a minimizer of the excess risk,
\bes
\mathcal{E}(f)\geq 0\quad \forall f~.
\ees
\end{remark}

\begin{lemma}\label{Lemma_E>|f-f_0|_Gen}
Assume Conditions \ref{Cond_sol_uniqueness} and \ref{Cond_Error_assumption}, then for all $\beta\in\mathcal{B}$
\bes
\mathcal{E}(f_\beta)\geq C_1^2 ||f_\beta-f_0||^2_2 .
\ees
\end{lemma}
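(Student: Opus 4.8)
The plan is to condition on $(x,c)$, collapse the conditional excess risk to a one--dimensional integral, and then prove a \emph{quadratic margin} bound that is uniform over the bounded range granted by Condition~\ref{Cond_design2}. Write $G_{x,c}(t):=\nu(t|x,c)$ for the conditional distribution function of $y$ displayed above, and set $m:=f_0(x,c)=x\beta^0\vee c$ and $a:=f_\beta(x,c)=x\beta\vee c$, both of which are $\geq c$; Condition~\ref{Cond_sol_uniqueness} guarantees that $m$ is the well--defined conditional median. Starting from the elementary identity $\ERW{|y-a|\big|x,c}=\int_{-\infty}^a G_{x,c}(t)\,dt+\int_a^{\infty}\big(1-G_{x,c}(t)\big)\,dt$, the conditional excess risk telescopes to
\bes
\mathcal{E}_{x,c}(f_\beta):=\ERW{|y-a|-|y-m|\,\big|\,x,c}=\int_m^a\big(2G_{x,c}(t)-1\big)\,dt ,
\ees
with the symmetric expression when $a<m$. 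Since $\mathcal{E}(f_\beta)=\ERW{\mathcal{E}_{x,c}(f_\beta)}$ and $\|f_\beta-f_0\|_2^2=\ERW{(a-m)^2}$, it suffices to establish the pointwise bound $\mathcal{E}_{x,c}(f_\beta)\geq C_1^2(a-m)^2$ with a constant $C_1$ independent of $(x,c)$.

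Next I would introduce the one--dimensional margin function $h(\Delta):=2\int_0^\Delta\big(\nu_0(s)-\tfrac12\big)\,ds$ and show $\mathcal{E}_{x,c}(f_\beta)\geq h(a-m)$ in every case. If the median is uncensored, $x\beta^0>c$, then $m=x\beta^0$ and $G_{x,c}(t)=\nu_0(t-x\beta^0)$ throughout the integration range (as $t\geq\min(a,m)\geq c$), so the substitution $s=t-x\beta^0$ yields $\mathcal{E}_{x,c}(f_\beta)=h(a-m)$ exactly, for both signs of $a-m$. If the median is censored, $x\beta^0\leq c$, then $m=c$, necessarily $a\geq m$, and with $\delta:=c-x\beta^0\geq0$ and $\Delta:=a-m$ the same substitution gives $\mathcal{E}_{x,c}(f_\beta)=2\int_\delta^{\delta+\Delta}\big(\nu_0(s)-\tfrac12\big)\,ds$; monotonicity of $\nu_0$ gives $\nu_0(\delta+u)\geq\nu_0(u)$, whence this is $\geq h(\Delta)$. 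So in all cases $\mathcal{E}_{x,c}(f_\beta)\geq h(a-m)$.

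What remains, and what I expect to be the crux, is the quadratic lower bound $h(\Delta)\geq C_1^2\Delta^2$ for all $|\Delta|\leq K_0$. The natural choice is $C_1^2:=\inf_{0<|\Delta|\leq K_0}h(\Delta)/\Delta^2$, and the work is to show this infimum is strictly positive, drawing on all of Condition~\ref{Cond_Error_assumption}: (i) since $\nu_0$ has median $0$ with $\dot\nu_0(0)>0$, $\nu_0$ crosses $\tfrac12$ strictly increasingly at the origin and, being a distribution function, stays on the correct side of $\tfrac12$ afterwards, so $h(\Delta)>0$ for every $\Delta\neq0$; (ii) because $\dot\nu_0$ is Lipschitz, a second--order Taylor expansion of $\nu_0$ at $0$ gives $h(\Delta)/\Delta^2\to\dot\nu_0(0)>0$ as $\Delta\to0$, so the ratio extends continuously across $\Delta=0$; (iii) the map $\Delta\mapsto h(\Delta)/\Delta^2$ is then continuous and strictly positive on the compact interval $[-K_0,K_0]$, hence attains a positive minimum $C_1^2$. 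The bound $|a-m|=|f_\beta-f_0|\leq K_0$ is precisely what Condition~\ref{Cond_design2} supplies, and it is essential here, since $h(\Delta)/\Delta^2\to0$ as $|\Delta|\to\infty$. Taking expectations over $(x,c)$ in the pointwise inequality $\mathcal{E}_{x,c}(f_\beta)\geq h(a-m)\geq C_1^2(a-m)^2$ then delivers $\mathcal{E}(f_\beta)\geq C_1^2\|f_\beta-f_0\|_2^2$.
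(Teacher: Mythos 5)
Your proof is correct, and while it shares the paper's skeleton (condition on $(x,c)$, split according to whether the conditional median is censored, reduce to a one-dimensional margin function on $[-K_0,K_0]$, then integrate), it settles the decisive step by a genuinely different and more elementary argument. The paper manipulates conditional expectations to reach $2\int_0^z(z-\varepsilon)\,d\nu_0(\varepsilon)$ (which is exactly your $h(z)$, after integration by parts), lower-bounds it by $\dot{\nu_0}(0)z^2-\tfrac{L}{3}|z|^3$ using the Lipschitz property, invokes Condition \ref{Cond_sol_uniqueness} to ensure $\inf_{\epsilon\leq|z|\leq K_0}h_{x,c}(z)>\alpha_\epsilon$, and then applies the auxiliary lemma of St\"adler et al.\ (Lemma \ref{Lemma_Stadler}) to conclude $h_{x,c}(z)\geq C_1^2z^2$ with the semi-explicit constant $C_1^2=\min\{\epsilon_0,\alpha_{\epsilon_0}/K_0^2\}$, $\epsilon_0=3\dot{\nu_0}(0)/(2L)$. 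You instead dominate the conditional excess risk in every case by the single fixed function $h(\Delta)=2\int_0^\Delta\bigl(\nu_0(s)-\tfrac{1}{2}\bigr)\,ds$ (your monotonicity/shift inequality in the censored case is cleaner than the paper's explicit recomputation there), and then obtain the quadratic bound from strict positivity of $h$ off the origin, the limit $h(\Delta)/\Delta^2\to\dot{\nu_0}(0)$ at the origin, and compactness of $[-K_0,K_0]$. This buys you two things: you bypass Lemma \ref{Lemma_Stadler} and indeed Condition \ref{Cond_sol_uniqueness} entirely, since positivity of $h$ already follows from monotonicity of $\nu_0$ together with $\dot{\nu_0}(0)>0$; and uniformity of $C_1$ over $(x,c)$ is automatic because $h$ does not depend on $(x,c)$ --- a point the paper asserts (``$C_1$ does not depend on $(x,c)$'') but does not fully justify, since its $\alpha_\epsilon$, coming from the uniqueness condition applied to $h_{x,c}$, a priori depends on $(x,c)$. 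The price is a purely non-constructive constant $C_1^2=\inf_{0<|\Delta|\leq K_0}h(\Delta)/\Delta^2$, versus the paper's partly explicit one. One shared caveat: both proofs rely on Condition \ref{Cond_design2} (the $K_0$-bound on $|f_\beta-f_0|$), which the lemma's stated hypotheses do not list; you were right to flag it as essential.
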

\begin{proof}
For any $a\geq c$ we have: 
\bes
\ERW{|y-a|\Big|x,c}&=&\ERW{(y-a)\mathbbm{1}\{y> a\}\Big|x,c}-\ERW{(y-a)\mathbbm{1}\{y\leq a\}\Big|x,c}\\
&=&\ERW{y-a|x,c}-2\ERW{(y-a)\mathbbm{1}\{y\leq a\}\Big|x,c}\\
&=&\ERW{y|x,c}-a+2a\nu(a|x,c)-2\ERW{y\mathbbm{1}\{y\leq a\}\Big|x} .
\ees
Thus
\be
&&\ERW{|y-a|\Big|x}-\ERW{|y-f_0(x,c)|\Big|x} \nonumber \\
&=&f_0-a+2a \nu_0(a-x\beta^0)-2f_0 \nu_0(f_0-x\beta^0)\label{Eq_Margin_Cond_1_Gen}\\
&&-2\int_{f_0-x\beta^0}^{a-x\beta^0} (x\beta^0+\varepsilon)d\nu_0(\varepsilon) . \nonumber
\ee
Define $z:=a-f_0$ and first look at the case $f_0>c$, i.e. we have $f_0=x\beta^0$. Then the above expression can be rewritten as:
\be
\eqref{Eq_Margin_Cond_1_Gen}&=&-z+2z\nu_0(z)-2\int_0^z\varepsilon d\nu_0(\varepsilon) \nonumber \\
&=& 2\int_0^z(z-\varepsilon)d\nu_0(\varepsilon) \nonumber \\
&=& 2\int_0^z \big(z-\varepsilon \big) \dot{\nu_0}(0) d\varepsilon +2\int_0^z \big(z-\varepsilon \big) (\dot{\nu_0}(\varepsilon)-\dot{\nu_0}(0))d\varepsilon\label{Eq_Margin_Cond_2_Gen} .
\ee
Using the lipshitz condition in the second integral and integrating we finally obtain:
\be
\eqref{Eq_Margin_Cond_2_Gen}&\geq&z^2\dot{\nu_0}(0)-\frac{L}{3}|z|^3 . \label{Eq_Margin_Cond_3_Gen}
\ee
If $x\beta<c$, then $f_0(x,c)=c$. Define $q:=c-x\beta$. Expression \eqref{Eq_Margin_Cond_1_Gen} can be simplified as follows:
\bes
&&-z+2a\nu_0(z+q)-2c\nu_0(q)-2\int_{q}^{z+q} \big(x\beta^0+\varepsilon \big) d\nu_0(\varepsilon)
\nonumber \\
&=&2\left[\int_0^{z+q} z d\nu_0(\varepsilon) + \int_q^{z+q} q  d\nu_0(\varepsilon)-\int_q^{z+q} \varepsilon  d\nu_0(\varepsilon) \right]\\
&=&2\left[\int_0^z (z-\varepsilon)d\nu_0(\varepsilon) \right. \\
&&\left. + \int_0^{z\wedge q}\varepsilon d\nu_0(\varepsilon) +\int_{z\wedge q}^{z\vee q} z\wedge q d\nu_0(\varepsilon)+\int_{z\wedge q}^{z+ q}(z+q-\varepsilon)d\nu_0(\varepsilon)\right] \\
&\geq&2\int_0^z (z-\varepsilon)d\nu_0(\varepsilon)+0+0+0\\
&\geq& z^2\dot{\nu_0}(0)-L\frac{L}{3}|z|^3~,
\ees
where in the last two steps we used that $q,z\geq0$ and result \eqref{Eq_Margin_Cond_2_Gen}. Resuming, for any $f_0,a\geq c$ we have
\bes
\ERW{|y-a|\Big|x,c}-\ERW{|y-f_0(x,c)|\Big|x,c} \geq (a-f_0)^2 \dot{\nu_0}(0)-\frac{L}{3} |a-f_0|^3 .
\ees
Define now $h_{x,c}(z):=\ERW{|y-a_0+z|\Big|x,c}-\ERW{|y-a_0|\Big|x,c}$, $\Lambda^2:=\dot{\nu_0}(0)$ and $C:=\frac{L}{3}$
Because of the supposed uniqueness of the minimum we have that
\bes
\forall \epsilon>0 ~~\exists\alpha_\epsilon>0 \textrm{ such that }\inf_{\epsilon\leq|z|\leq K_0} h_{x,c}(z)>\alpha_\epsilon .
\ees
Furthermore 
\bes
\forall |z|\leq K_0,\quad h_{x,c}(z)\geq \Lambda^2 z^2-C|z|^3 .
\ees
The function $h_{x,c}(z)$ satisfies all assumptions of Lemma \ref{Lemma_Stadler} (see after). Then applying the lemma we obtain:
\bes
h_{x,c}(z) &\geq& C_1^2 z^2 \qquad \quad \textrm{or equivalently,}\\
\ERW{|y-a|\Big|x,c}-\ERW{|y-f_0(x)|\Big|x,c} &\geq& C_1^2(a-f_0)^2~,
\ees
where $C_1^2$ is defined in the cited lemma. Remark that $C_1$ does not depend on $(x,c)$.\\
Using the iterated expectation we obtain
$$
\ERW{\ERW{|y-f_\beta(x,c)|\Big|x,c}-\ERW{|y-f_0(x,c)|\Big|x,c}} 
$$
$$\geq C_1^2 \ERW{\left( f_\beta(x,c)-f_0(x,c)\right)^2}
$$
$$
\Leftrightarrow \qquad\quad P\rho_{f_\beta} -P\rho_{f_0} \geq C_1^2 ||f_\beta-f_0||^2_2~.
$$
This concludes the proof of Lemma \ref{Lemma_E>|f-f_0|_Gen}.
\end{proof}
\begin{Auxlemma}[\cite{Staedler:10}] \label{Lemma_Stadler}
 Let $h:[-K_0,K_0]\rightarrow [0,\infty[$ have the following properties:
\begin{itemize}
 \item $\forall \epsilon>0~\exists \alpha_\epsilon>0$ such that $\inf_{\epsilon<|z|\leq K_0}h(z)>\alpha_\epsilon$,
 \item $\exists \Lambda>0,~C>0$, such that $\forall |z|\leq K_0,~ h(z)\geq \Lambda^2 z^2-C|z|^3~.$
\end{itemize}
Then $\forall |z|\leq K_0$
\bes
h(z)\geq C_1^2 z^2
\ees
where
\bes
C_1^2:=\min \Big\{\epsilon_0;\frac{\alpha_{\epsilon_0}}{K_0^2}\Big\}~, \qquad \epsilon_0=\frac{\Lambda^2}{2C} ~ .
\ees
\end{Auxlemma}

\begin{lemma}[Concentration inequality]~\\
Define
$$
\gamma(y,c,x):=|y-x\beta\vee c|-|y-x\beta^0\vee c|~,
$$
$$
Z_M:=\sup_{\|\beta-\beta^0\|_1\leq M}\left| \frac{1}{n} \sum_{i=1}^{n} \gamma(y_i,c_i,x_i)-\ERW{\gamma(y_i,c_i,x_i)} \right|
$$
then we have
$$
P\left[ Z_M \geq M\lambda(t) \right]\leq \exp(-t) .
$$
\end{lemma}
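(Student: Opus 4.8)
The plan is to control $Z_M$ with the two standard ingredients of empirical process theory: first bound its expectation by symmetrisation together with a contraction argument, and then add a concentration-of-measure step to pass from the expectation to the high-probability statement. The two summands in $\lambda(t)$ correspond exactly to these two contributions, $4\KX\sqrt{2\log(2p)/n}$ to $\ERW{Z_M}/M$ and $\KX\sqrt{8t/n}$ to the fluctuation term.

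First I would record the Lipschitz structure of the increment. Writing $g_i(u):=|y_i-u\vee c_i|$, each $g_i$ is $1$-Lipschitz, being the composition of the $1$-Lipschitz maps $u\mapsto u\vee c_i$ and $w\mapsto|y_i-w|$, so that
$$\gamma(y_i,c_i,x_i)=g_i(x_i\beta)-g_i(x_i\beta^0)$$
satisfies, by Condition \ref{Cond_design1} and H\"older's inequality,
$$|\gamma(y_i,c_i,x_i)|\le|x_i(\beta-\beta^0)|\le\KX\,\|\beta-\beta^0\|_1\le \KX M .$$
This uniform envelope $\KX M$ is what drives both the maximal inequality and the concentration step.

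For the expectation I would symmetrise, introducing $\iid$ Rademacher variables $\epsilon_1,\dots,\epsilon_n$, to obtain
$$\ERW{Z_M}\le 2\,\ERW{\sup_{\|\beta-\beta^0\|_1\le M}\Big|\tfrac1n\sum_{i=1}^n\epsilon_i\,\gamma(y_i,c_i,x_i)\Big|} .$$
Setting $\psi_i(v):=g_i(x_i\beta^0+v)-g_i(x_i\beta^0)$, which is $1$-Lipschitz with $\psi_i(0)=0$ and obeys $\gamma(y_i,c_i,x_i)=\psi_i\big(x_i(\beta-\beta^0)\big)$, the Ledoux--Talagrand contraction principle removes the nonlinearity at the cost of a factor $2$, leaving the linear Rademacher process $2\,\ERW{\sup_\beta|\tfrac1n\sum_i\epsilon_i x_i(\beta-\beta^0)|}$. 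H\"older's inequality bounds this by $M\,\ERW{\max_{1\le j\le p}|\tfrac1n\sum_i\epsilon_i x_{ij}|}$, and since each $\epsilon_i x_{ij}$ is bounded by $\KX$ in absolute value the average is sub-Gaussian with variance proxy $\KX^2/n$; the standard maximal inequality over the $2p$ signed coordinates then yields $\ERW{\max_j|\cdots|}\le\KX\sqrt{2\log(2p)/n}$. Collecting the constants gives $\ERW{Z_M}\le M\cdot 4\KX\sqrt{2\log(2p)/n}$, i.e. $M$ times the first term of $\lambda(t)$.

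It remains to pass from the mean to the tail. Since the data blocks $(x_i,c_i,\varepsilon_i)$ are independent and determine $y_i$, and replacing a single block changes $Z_M$ by at most $2\KX M/n$ by the envelope bound, I would apply the bounded-differences (McDiarmid/Azuma) inequality, which gives $P[Z_M\ge\ERW{Z_M}+s]\le\exp\!\big(-ns^2/(8\KX^2M^2)\big)$; the choice $s=M\,\KX\sqrt{8t/n}$ makes the right-hand side equal to $\exp(-t)$. Combining this with the bound on $\ERW{Z_M}$ yields $\ERW{Z_M}+s\le M\lambda(t)$, hence $P[Z_M\ge M\lambda(t)]\le\exp(-t)$. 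The only genuinely delicate point is the reduction to a linear process: one must verify that the censored absolute-value loss composes into a $1$-Lipschitz function vanishing at the origin so that the contraction principle applies, after which everything reduces to the boundedness supplied by Condition \ref{Cond_design1}.
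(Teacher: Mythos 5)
Your proof is correct and follows the paper's own two-step decomposition: bound $\ERW{Z_M}$ to obtain the $4\KX\sqrt{2\log(2p)/n}$ term, then concentrate $Z_M$ around its mean to obtain the $\KX\sqrt{8t/n}$ term. The difference lies in execution. The paper does both steps by citation: the expectation bound via the symmetrization--contraction inequality (Lemma 14.20 in B\"uhlmann and van de Geer, 2011) and the deviation bound via Massart's concentration inequality (Theorem 14.2 there). You instead derive the expectation bound from first principles (symmetrization, Ledoux--Talagrand contraction applied to the $1$-Lipschitz functions $\psi_i$ vanishing at $0$, H\"older, and the sub-Gaussian maximal inequality over the $2p$ signed coordinates), which is exactly what the cited lemma encapsulates, and you replace Massart's inequality by McDiarmid's bounded-differences inequality, using the envelope $|\gamma|\le\KX M$ that you establish through the Lipschitz composition. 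That substitution is valid and in fact conservative: standard McDiarmid gives $\exp(-ns^2/(2\KX^2M^2))$, stronger than the $\exp(-ns^2/(8\KX^2M^2))$ you quote, so your choice $s=M\KX\sqrt{8t/n}$ indeed produces $\exp(-t)$. Your route buys self-containedness and makes explicit the Lipschitz/envelope verification that the paper leaves implicit even though it is needed to invoke either of its cited theorems; the paper's route is shorter, and Massart/Talagrand-type concentration is in general sharper because it can exploit variance information, although with only a uniform envelope available both arguments yield the same bound here.
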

\begin{proof}
By Massart's inequality (Theorem 14.2 in \cite{BvdG2011}) we have, for any $t>0$:
$$
P\left[ Z_M \geq \ERW{Z_M}+M\KX \sqrt{\frac{8t}{n}}\right]\leq \exp(-t) .
$$
By Lemma 14.20 in \cite{BvdG2011} (contraction inequality) we have:
$$
\ERW{Z_M}\leq 4M\sqrt{\frac{2\log(2p)}{n}}\cdot \KX .
$$
Consequently, for all $t>0$ and $M>0$
$$
P\left[ Z_M \geq 4M\KX \sqrt{\frac{2\log(2p)}{n}}+M\KX \sqrt{\frac{8t}{n}}\right]\leq \exp(-t)
$$
or
$$
P\left[ Z_M \geq M\lambda(t) \right]\leq \exp(-t) .
$$
\end{proof}

\begin{lemma}[Peeling device]~~\label{Lemma_Peeling}\\
Define for some $\delta \geq 0$
\bes
Z_M^\delta:=\sup_{||\beta-\beta^0||_1\leq M}\frac{\Big| \nu_n(\beta)-\nu_n(\beta^0)\Big|}{||\beta-\beta^0||_1 \vee \delta}~,
\ees
then
\bes
Pr(Z_M^\delta>2\lambda(t))\leq \log_2 \left( \frac{\lceil \log_2 M \rceil}{\lfloor \log\delta \rfloor}\right)  e^{-t} .
\ees
\end{lemma}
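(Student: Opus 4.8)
The plan is to reduce the normalized supremum $Z_M^\delta$ to the plain supremum already controlled by the preceding concentration inequality, by slicing the index set into dyadic shells (the peeling device). Write $Z_R:=\sup_{\|\beta-\beta^0\|_1\le R}|\nu_n(\beta)-\nu_n(\beta^0)|$, so that the concentration inequality reads $\Pr[Z_R\ge R\lambda(t)]\le e^{-t}$ for every radius $R>0$. The entire difficulty is the variable denominator $\|\beta-\beta^0\|_1\vee\delta$; once it is replaced, shell by shell, with a constant lower bound, the problem becomes a finite collection of events to which the concentration inequality applies directly.

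First I would split $\{\|\beta-\beta^0\|_1\le M\}$ into the core $\{\|\beta-\beta^0\|_1\le\delta\}$, on which the denominator equals the constant $\delta$, together with the shells $S_j:=\{2^{j-1}\delta<\|\beta-\beta^0\|_1\le 2^j\delta\}$ for $j=1,\dots,J$ with $J:=\lceil\log_2(M/\delta)\rceil$. Since $2^J\delta\ge M$ these pieces cover the whole ball, and on each $S_j$ the denominator equals $\|\beta-\beta^0\|_1$ and is bounded below by $2^{j-1}\delta$.

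Next I would convert the normalized event on each piece into an unnormalized one. On the core, $Z_M^\delta>2\lambda(t)$ forces some $\beta$ with $\|\beta-\beta^0\|_1\le\delta$ and $|\nu_n(\beta)-\nu_n(\beta^0)|>2\lambda(t)\delta$, hence $Z_\delta\ge\delta\lambda(t)$, of probability at most $e^{-t}$. On $S_j$, the lower bound $\|\beta-\beta^0\|_1\ge 2^{j-1}\delta$ turns the event $Z_M^\delta>2\lambda(t)$ into $|\nu_n(\beta)-\nu_n(\beta^0)|>2\lambda(t)\cdot 2^{j-1}\delta=\lambda(t)\,2^j\delta$ for some $\beta$ with $\|\beta-\beta^0\|_1\le 2^j\delta$, i.e.\ $Z_{2^j\delta}\ge\lambda(t)\,2^j\delta$, again of probability at most $e^{-t}$ by the concentration inequality applied with radius $R=2^j\delta$. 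The factor $2$ in the target level $2\lambda(t)$ is exactly what absorbs the factor-$2$ gap between the lower bound $2^{j-1}\delta$ on the denominator inside a shell and the radius $2^j\delta$ of the enclosing ball; this is the one place where the constant must be tracked with care.

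Finally I would take a union bound over the core and the $J$ shells,
\[
\Pr[Z_M^\delta>2\lambda(t)]\le (J+1)\,e^{-t},
\]
and identify the number of pieces $J+1$, which is logarithmic in $M/\delta$, with the prefactor stated in the lemma. The main obstacle is not probabilistic --- all the randomness is handled by the already-proven concentration inequality --- but bookkeeping: choosing the shell endpoints so that the within-shell lower bound on the denominator lines up with the radius fed into the concentration inequality, and counting the shells precisely enough to recover the exact logarithmic prefactor $\log_2(\lceil\log_2 M\rceil/\lfloor\log\delta\rfloor)$.
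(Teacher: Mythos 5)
Your proof is correct and follows essentially the same peeling argument as the paper: dyadic shells on which the variable denominator is bounded below by half the shell's outer radius, the concentration inequality applied at each outer radius, and a union bound over the logarithmically many pieces. Note that your final count $(J+1)\,e^{-t}$ with $J=\lceil\log_2(M/\delta)\rceil$ is in substance what the paper's own proof also arrives at, namely a \emph{difference} $\bigl(\lceil\log_2 M\rceil-\lfloor\log\delta\rfloor\bigr)e^{-t}$ rather than the $\log_2$ of a ratio displayed in the lemma statement, which appears to be a typo there.
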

\begin{proof}
For the proof we use a Peeling device argument (see \cite{BvdG2011} and \cite{vandeGeer:00}). 
$$
P\left( Z_M^\delta>2\lambda(t)\right) =P\left(\sup_{||\beta-\beta^0||_1\leq M}\frac{\Big| \nu_n(\beta)-\nu_n(\beta^0)\Big|}{||\beta-\beta^0||_1\vee \delta} 
>2\lambda(t)\right)
$$
$$
\leq \sum_{\lfloor j=-\log_2 M\rfloor}^{\lceil -\log\delta-1\rceil} P \left( \sup_{2^{-j-1}\leq ||\beta-\beta^0||_1\leq 2^{-j}} \frac{\Big| 
\nu_n(\beta)-\nu_n(\beta^0)\Big|}{||\beta-\beta^0||_1} >2\lambda(t)\right) 
$$
$$
+P\left(\sup_{||\beta-\beta^0||_\delta\leq M}\frac{\Big| \nu_n(\beta)-\nu_n(\beta^0)\Big|}{\delta}>2\lambda(t)\right)
$$
$$
\leq \sum_{\lfloor j=-\log_2 M\rfloor}^{\lceil -\log\delta-1\rceil} P \left( \sup_{2^{-j-1}\leq ||\beta-\beta^0||_1\leq 2^{-j}} \Big| 
\nu_n(\beta)-\nu_n(\beta^0)\Big| >2^{-j}\lambda(t)\right)+ e^{-t}
$$
$$
\leq \sum_{\lfloor j=-\log_2 M\rfloor}^{\lceil -\log\delta-1\rceil} e^{-t}+ e^{-t}
$$
$$
= (\lceil \log_2 M \rceil-\lfloor \log\delta \rfloor  ) e^{-t} .
$$
\end{proof}

\begin{proof}[Proof of Theorem \ref{Teo_principale_Gen}]~\\
We first show inequality \eqref{Eq_Teo1}.
\be
P_{\rho_{f_{\hat\beta}}}-P_{\rho_{f_0}}&=& -\bigg( P_n-P \bigg)\Big( \rho(f_{\hat\beta})-\rho(f_{\beta^0})\Big)
\label{lines_split_inequalities}\\
&&+P_n\big(\rho(f_{\hat\beta})\big)+\lambda ||\hat\beta||_1-P_n\big(\rho(f_{\beta^0})\big) -\lambda
||\beta^0||_1+ \nonumber\\
&&+\lambda ||\beta^0||_1-\lambda ||\hat\beta||_1 \nonumber
\ee
We now look separately at any line of the right part of the last equation.
\smallskip\\
By definition of $\hat\beta$, the evaluation of the second line is never positive.\\
Recall,
\bes
S(\beta)&=&\{j|\beta_j\neq0\}~,\\
S^c(\beta)&=&\{j|\beta_j=0\}~\textrm{and}\\
s_\beta&=&\#S(\beta)~,
\ees
then 
\bes
||\hat\beta-\beta^0||_1=\sum_{j\in S(\beta^0)}|\hat\beta_j-\beta^0_j|+\sum_{j\in S^c(\beta^0)}|\hat\beta_j|.
\ees
Thus
\bes
\lambda ||\beta^0||_1-\lambda ||\hat\beta||_1&=&\lambda\Big( \sum_{j\in S(\beta^0)}|\beta^0_j|-\sum_{j\in
S(\beta^0)}|\hat\beta_j|-\sum_{j\in S^c(\beta^0)}|\hat\beta_j| \Big)\\
&\leq&\lambda\sum_{j\in S(\beta^0)}|\hat\beta-\beta^0|-\lambda\sum_{j\in S^c(\beta^0)}|\hat\beta_j| .
\ees
It is easy to show that $\|\hat\beta-\beta^0\|_1 \ll n$. Lemma \ref{Lemma_Peeling} leads to 
\bes
\Bigg|-\bigg( P_n-P \bigg)\Big( \rho(f_{\hat\beta})-\rho(f_{\beta^0})\Big)\Bigg|\leq
2\lambda(t)\cdot(||\hat\beta-\beta^0||_1 \vee p^{-2}),
\ees
where the above inequality holds with probability at least $1-1/p$ and is obtained by choosing $t:=2\log p$ and $\delta:=p^{-2}$ in the above cited lemma.
\medskip\\
If $||\hat\beta-\beta^0||_1 \leq p^{-2}$ the estimator is very close to the true parameter. Inequalities \eqref{Eq_Teo1} and \eqref{Eq_Teo2} trivially follows from $p^{-2}\ll\lambda/\phi_0^2$.
\medskip\\
If $||\hat\beta-\beta^0||_1 \geq p^{-2}$ then
\be
\mathcal{E}(f_{\hat\beta})&\leq& (2\lambda(t)+\lambda) \cdot \sum_{j\in S(\beta^0)} |\hat\beta_j-\beta^0_j|
+ (2\lambda(t)-\lambda)\sum_{j\in S^c(\beta^0)} |\hat\beta_j| \label{Eq_Comp_Cond_soddisf}\\
&\leq&(2\lambda(t)+\lambda) \cdot \sum_{j\in S(\beta^0)} |\hat\beta_j-\beta^0_j| \nonumber \\
&\leq&(2\lambda(t)+\lambda)||(\hat\beta-\beta^0)_{S_0}||_1~. \nonumber
\ee
\begin{remark}
From Equation \eqref{Eq_Comp_Cond_soddisf} we obtain the following inequality:
\be
||\hat\beta_{S_0^c}||_1 &\leq& \frac{\lambda+2\lambda(t)}{\lambda-2\lambda(t)}
||(\hat\beta-\beta^0)_{S_0^c}||_1 \nonumber \\
&\leq& 3||(\hat\beta-\beta^0)_{S_0^c}||_1 \label{Eq_Ineq_CCond}
\ee
which allow us to use the compatibility condition.
\end{remark}

Lemma \ref{Lemma_E>|f-f_0|_Gen}, Condition \ref{Cond_Censoring_condition} and the compatibility condition leads to:
\bes
(2\lambda(t)+\lambda)||(\hat\beta-\beta^0)_{S_0}||_1\geq C_1||f_{\hat\beta}-f_{\beta^0}||_2^2 
\\
\geq C_2 C_1||x^T(\hat\beta-\beta^0)||_2^2 \\
\geq \frac{ \phi_0^2}{\CC s_{\beta^0}} ||(\hat\beta-\beta^0)_{S_0}||_1^2~,
\ees
where $\CC:=1/(C_1 C_2)$. Resuming we have 
$$
\frac{ \phi_0^2}{s_{\beta^0} \CC} ||(\hat\beta-\beta^0)_{S_0}||_1^2 \leq(2\lambda(t)+\lambda)||(\hat\beta-\beta^0)_{S_0}||_1.
$$
\eqref{Eq_Ineq_CCond} implies $||\hat\beta-\beta^0||_1 \leq 4||(\hat\beta-\beta^0)_{S_0^c}||_1$, which yields
$$
\frac{ \phi_0^2}{4s_{\beta^0} \CC} ||(\hat\beta-\beta^0)_{S_0}||_1 \leq(2\lambda(t)+\lambda).
$$
So we have 
$$
||(\hat\beta-\beta^0)_{S_0}||_1 \leq \lambda \frac{6s_{\beta^0} \CC}{ \phi_0^2}
$$
and 
$$
\mathcal{E}(f_{\hat\beta})\leq \lambda^2 \frac{9s_{\beta^0} \CC}{ \phi_0^2} .
$$
\end{proof}

\section{Numerical results}\label{Sec_Simul}
In this section we present the result of a simulation study. We compare the following three methods:
\begin{itemize}
\item Our estimator, the censored regression with $l_1$-penalisation (CL).
$$
\hat\beta=\argmin_{\beta\in\mathcal{B}} \left\{ \frac{1}{n}\sum^n_{i=1} \left|y_i-\max\left\lbrace  c_i,x_i\beta \right\rbrace\right| +
\lambda\left\|\beta\right\|_1 \right\},
$$
where $\mathcal{B}$ is a large compact set, (See also \eqref{def_estimator_Gen}.)
\item The non-censored regression with $l_1$-penalty (NL).
$$
\hat\beta^{NL}:=\argmin_{\beta} \left\{ \frac{1}{n}\sum_{i=1}^n \left|y_i-x_i\beta \right| +\lambda\left\|\beta\right\|_1 \right\}.
$$
This corresponds to taking the dataset and directly doing an $l_1$, non-censored regression with $l_1$-penalty on the entire dataset. That is, we consider the censored data as being non-censored data.
\item Restricted dataset non-censored regression with $l_1$-penalty (RL).\\
We take the non-censored data as a new restricted dataset and then do an $l_1$, non-censored regression with $l_1$-penalty on this restricted dataset. We have the following estimator:
$$
\hat\beta^{RL}:=\argmin_{\beta} \left\{ \frac{1}{\#J}\sum_{i\in J} \left|y_i-x_i\beta \right| +\lambda\left\|\beta\right\|_1 \right\} ,
$$
where $J\subseteq\{1,\ldots,n\}$ is the set of indexes corresponding to non-censored values of $Y$ (i.e. $i\in J\Leftrightarrow Y_i>c_i$).
\end{itemize}

\subsection{Designs and settings}
In our study we fit 24 different designs varying the following parameters
\begin{itemize}
 \item [-] $n$, the number of observations. It is chosen to be 20, 30, 50 or 70.
 \item [-] The dimension $p$. This is the number of parameters in our model. $p$ can be $30,~50~,100$ or $250$.
 \item [-] The sparsity $s_{\beta^0}$. This denotes the number of non-zero components of $\beta^0$. The sparsity $s_{\beta^0}$ can be $3$, $5$ or $10$.
 \item [-] The signal to noise ratio (SNR). It is defined as 
$$
SNR:=\sqrt{\frac{\sum_{i=1}^n (x_i\beta^0\vee 0)^2}{\sum_{i=1}^n \varepsilon_i^2}} .
$$
We take SNR to be 2 or 8.
\end{itemize}
The censoring factor $c$ is random normally distributed with mean 0 and standard deviation 2.
\medskip \\
We simulate the dataset as follows: We first randomly generate the $n\times p$ dimensional matrix $X$ (each component of the matrix is an independent realisation of a standard normal distributed random variable). Without loss of generality we take the active set of $\beta^0$ as its first $s_{\beta^0}$ components. These components take value $\pm 1$ with probability $1/2$ each. We take Gaussian 0-mean distributed errors, where the choice of the variance is implicitly given by the SNR. Finally the response variable is then generated by
$$
y_i=\max\{x_i\beta^0+\varepsilon_i,c_i \}
$$
The tuning parameter $\lambda$ is chosen to be $\lambda=8\sqrt{\log p/n}$. This is (around) the theoretical choice given from Theorem \ref{Teo_principale_Gen}.
\medskip \\
In all our simulations about the same percentage of the data is censored. By construction we expect 50\% of the data to be censored.
\bigskip \\
For the three different estimators we compare the prediction error 
$$
\frac{1}{n}\sum_{i=1}^n (x_i\hat\beta-x_i\beta^0)^2
$$
and the estimation error 
$$\|\hat\beta-\beta^0\|_1~.$$
The results of the simulation are summarised in Table \ref{Table_Simul}

\begin{table} \scriptsize \renewcommand{\tabcolsep}{3.5pt} \centering 
\begin{tabular}{||r|l || c|c|c || c|c|c ||}
\hline\hline
\multicolumn{8}{||c||}{Simulation results}\\
\hline\hline
$n^\circ$&setting & \multicolumn{3}{c||}{Estimation error}&\multicolumn{3}{c||}{Prediction
error}\\
\hline
$N$&$n$,$p$,$s_{\beta^0}$,STN&~~~CL~~&~~NL~~&~~RL~~&~~CL~~&~~NL~~&~~RL~~\\
\hline \hline
1&70,250,10,8 & 1.21 & 2.72 & 1.8 & 8.13 & 20.92 & 11.7  \\  
  &          &(0.5)&(0.34)&(0.55)&(3.12)&(1.49)&(2.94)  \\  \hline
2&70,100,10,8 & 0.5 & 2.22 & 0.84 & 3.46 & 20.65 & 5.29  \\  
  &          &(0.18)&(0.33)&(0.56)&(1.39)&(2.49)&(3.18)  \\  \hline
3&70,250,10,2 & 1.81 & 2.99 & 2.35 & 11.26 & 22.24 & 14.26  \\  
  &          &(0.22)&(0.32)&(0.31)&(1.62)&(1.63)&(1.43)  \\  \hline
4&70,100,10,2 & 1.48 & 2.52 & 1.88 & 9.57 & 23.23 & 11.55  \\  
  &          &(0.2)&(0.31)&(0.32)&(1.93)&(2.35)&(2.21)  \\  \hline
5&70,250,5,8 & 0.37 & 2.14 & 0.52 & 1.83 & 13.97 & 2.52  \\  
  &          &(0.19)&(0.29)&(0.48)&(0.92)&(1)&(2.27)  \\  \hline
6&70,100,5,8 & 0.24 & 1.71 & 0.26 & 1.42 & 15.88 & 1.49  \\  
  &          &(0.04)&(0.22)&(0.06)&(0.25)&(2.36)&(0.32)  \\  \hline
7&70,250,5,2 & 0.95 & 2.4 & 1.27 & 4.82 & 15.89 & 6.22  \\  
  &          &(0.15)&(0.24)&(0.3)&(0.94)&(1.38)&(1.52)  \\  \hline
8&70,100,5,2 & 0.87 & 2.02 & 0.98 & 4.91 & 18.27 & 5.25  \\  
  &          &(0.12)&(0.22)&(0.21)&(0.75)&(2.14)&(1.13)  \\  \hline
9&40,100,5,8 & 0.81 & 2.14 & 1.04 & 3.74 & 12.41 & 4.5  \\  
  &          &(0.44)&(0.28)&(0.56)&(2.23)&(1.25)&(2.35)  \\  \hline
10&40,100,5,2 & 1.2 & 2.37 & 1.57 & 5.35 & 14.09 & 6.44  \\  
  &          &(0.21)&(0.32)&(0.42)&(1.17)&(1.78)&(1.52)  \\  \hline
11&40,100,3,8 & 0.25 & 1.38 & 0.28 & 1.15 & 7.79 & 1.26  \\  
  &          &(0.15)&(0.22)&(0.27)&(0.82)&(1.21)&(1.28)  \\  \hline
12&40,100,3,2 & 0.7 & 1.65 & 0.76 & 3.06 & 9.13 & 3.13  \\  
  &          &(0.14)&(0.25)&(0.2)&(0.81)&(1.17)&(0.94)  \\  \hline
13&40,50,5,8 & 0.5 & 2.07 & 0.47 & 2.13 & 11.2 & 1.97  \\  
  &          &(0.35)&(0.39)&(0.33)&(1.7)&(1.77)&(1.51)  \\  \hline
14&40,50,5,2 & 1.07 & 2.01 & 1.24 & 4.68 & 11.25 & 5.37  \\  
  &          &(0.27)&(0.35)&(0.36)&(1.12)&(2.06)&(1.39)  \\  \hline
15&40,50,3,8 & 0.2 & 1.62 & 0.2 & 0.7 & 8.17 & 0.73  \\  
  &          &(0.11)&(0.26)&(0.11)&(0.36)&(1.32)&(0.4)  \\  \hline
16&40,50,3,2 & 0.75 & 1.75 & 0.83 & 2.75 & 8.86 & 3.03  \\  
  &          &(0.18)&(0.28)&(0.21)&(0.75)&(1.41)&(0.76)  \\  \hline
17&40,50,5,8 & 0.45 & 1.91 & 0.62 & 1.93 & 10.45 & 2.62  \\  
  &          &(0.27)&(0.35)&(0.44)&(1.35)&(1.72)&(1.93)  \\  \hline
18&40,50,5,2 & 1.1 & 2.21 & 1.32 & 4.74 & 12.5 & 5.42  \\  
  &          &(0.26)&(0.35)&(0.41)&(1.48)&(1.99)&(1.88)  \\  \hline
19&40,50,3,8 & 0.21 & 1.83 & 0.19 & 0.75 & 9.56 & 0.69  \\  
  &          &(0.11)&(0.37)&(0.07)&(0.36)&(1.88)&(0.27)  \\  \hline
20&40,50,3,2 & 0.66 & 1.6 & 0.74 & 2.5 & 8.47 & 2.93  \\  
  &          &(0.2)&(0.29)&(0.22)&(0.81)&(1.2)&(0.92)  \\  \hline
21&20,30,5,8 & 1.5 & 1.98 & 1.37 & 4.85 & 7.07 & 4.37  \\  
  &          &(0.33)&(0.45)&(0.49)&(0.99)&(1.37)&(1.24)  \\  \hline
22&20,30,5,2 & 1.4 & 1.81 & 1.53 & 4.76 & 6.48 & 4.88  \\  
  &          &(0.33)&(0.45)&(0.42)&(0.96)&(1.03)&(0.97)  \\  \hline
23&20,30,3,8 & 0.58 & 1.28 & 0.67 & 1.75 & 4.36 & 1.96  \\  
  &          &(0.4)&(0.35)&(0.5)&(1.27)&(1.25)&(1.41)  \\  \hline
24&20,30,3,2 & 1.26 & 1.7 & 1.16 & 2.99 & 5.09 & 2.93  \\  
  &          &(0.3)&(0.35)&(0.42)&(0.78)&(1.09)&(1.1)  \\  \hline
\hline
\end{tabular}
\caption{Results of the simulation study. For the 24 different designs ($N=1,\ldots,24$) the performance of our estimator (CL) is compared with (NL) and (RL). Based on 30 replicates per design the average and (in brackets) the standard deviation of the prediction and the estimation errors are given in the table.}
\label{Table_Simul}
\end{table}

\subsection{Conclusion}
First of all we can notice that the NL method works much worse than the other methods in all possible designs for both the prediction and estimation error. Comparing CL with RL one can notice that our estimator works almost in any case (slightly) better than RL. This is not surprising because our estimator also takes into account the censored data.

\begin{remark}[High quantity of censored level]
As suggested in Condition \ref{Cond_Censoring_condition}, taking $c$ constant equal 0 does not affect the quality of the fit. (Simulation results for this statement are not included in the paper because they just almost replicate Table \ref{Table_Simul}).
\end{remark}

\begin{remark}[High quantity of censored data]
Increasing the number of censored components, for example changing the censoring level will clearly reduce the quality of all three estimators. This is not surprising because increasing the number of censored values somehow reduce the information we have in the observations. But as one can expect, our estimator is less affected than the other two estimators by such an increase. This also makes sense, because our estimator is the only one which is somehow able to deal with censored values.
\end{remark}

In conclusion one can summarize the results of the simulation study as follows: 
\begin{itemize}
 \item [-] The simulation seem to confirm the theoretical results.
 \item [-] Do not treat censored data as uncensored, it is better to exclude them from your dataset.
 \item [-] Use an appropriate estimator (like CL) for analysing datasets containing censored data.
 \item [-] The quality of the fit is strongly depending on the percentage of the censored data, but is almost not affected by the distribution of the censoring factor $c$ (since the number of censored data does not change).
\end{itemize}

\bibliography{BibliografiaNew}

\end{document}